\documentclass[12pt]{amsart}
\usepackage{amsfonts,amsmath,amsthm,amssymb}
\usepackage{url}

\newcommand{\nexteq}{\displaybreak[0]\\ &=}


\newtheorem{thm}{Theorem}
\newtheorem{lem}[thm]{Lemma}
\newtheorem{prop}[thm]{Proposition}
\theoremstyle{definition}

\theoremstyle{remark}
\newtheorem{rem}[thm]{Remark}

\DeclareMathOperator{\GF}{GF}
\DeclareMathOperator{\wt}{wt}

\newcommand{\FF}{\mathbb{F}}
\newcommand{\RR}{\mathbb{R}}
\newcommand{\Z}{\mathbb{Z}}
\newcommand{\ZZ}{\mathbb{Z}}
\newcommand{\allone}{\mathbf{1}}
\newcommand{\ep}{\varepsilon}

\begin{document}

\title{On the classification of self-dual $[20,10,9]$ codes over $\GF(7)$}

\author{Masaaki Harada}
\address{Research Center for Pure and Applied
Mathematics, 
Graduate School of Information Sciences, 
Tohoku University, Sendai 980--8579, Japan}
\email{mharada@m.tohoku.ac.jp}
\author{Akihiro Munemasa}
\address{Research Center for Pure and Applied Mathematics, 
Graduate School of Information Sciences, 
Tohoku University, Sendai 980--8579, Japan}
\email{munemasa@math.is.tohoku.ac.jp}
\date{}
\keywords{self-dual code, skew-Hadamard matrix, unimodular lattice}
\subjclass[2010]{94B05}

\maketitle

\begin{center}
{In memory of Yutaka Hiramine}
\end{center}

\begin{abstract}
It is shown that the extended quadratic residue code of length $20$
over $\GF(7)$
is a unique self-dual $[20,10,9]$ code $C$ such that
the lattice obtained from $C$ by Construction A is isomorphic to
the $20$-dimensional unimodular lattice $D_{20}^+$,
up to equivalence.
This is done by converting the classification of
such self-dual codes to that of
skew-Hadamard matrices of order $20$.
\end{abstract}

\section{Introduction}

Let $\GF(p)$ be the finite field of order $p$, where $p$ is prime.
As described in~\cite{RS-Handbook},
self-dual codes are an important class of linear codes for both
theoretical and practical reasons.
For $p \equiv 1 \pmod 4$,
a self-dual code of length $n$ over $\GF(p)$ exists
if and only if $n$ is even, and
for $p \equiv 3 \pmod 4$,
a self-dual code of length $n$ over $\GF(p)$ exists
if and only if $n \equiv 0 \pmod4$.
It is a fundamental problem to classify self-dual codes 
over $\GF(p)$ and determine the largest minimum weight 
among self-dual codes over $\GF(p)$ for a fixed length.
Much work has been done towards classifying self-dual codes 
over $\GF(p)$ and determining the largest minimum 
weight among self-dual codes of a given length over $\GF(p)$ 
for $p=2$ and $3$ (see~\cite{RS-Handbook}).

Self-dual codes over $\GF(7)$ have been classified 
for lengths up to $12$ (see~\cite{HO02}), 
and the largest minimum weight $d_7(n)$
among self-dual codes of length $n$ over $\GF(7)$
has been determined for $n \le 28$ (see~\cite[Table~2]{GHM}). 
For example, it is known that $d_7(20)=9$ and
the extended quadratic residue code $QR_{20}$ of length $20$ 
over $\GF(7)$ is a self-dual $[20,10,9]$ code (see~\cite{Grassl}).
%
%

There are $12$ nonisomorphic
$20$-dimensional unimodular lattices having minimum norm $2$
(see~\cite[Table~16.7]{SPLAG}),
and one of them is $D^+_{20}$.
Let $A_{7}(C)$ denote the unimodular lattice obtained from
a self-dual code $C$ over $\GF(7)$
by Construction A\@. 

In this paper, we convert the classification of
self-dual $[20,10,9]$ codes $C$ over $\GF(7)$ such that
$A_7(C)$ is isomorphic to $D_{20}^+$ to that of
skew-Hadamard matrices of order $20$.
  The main aim of this paper is to give the following
partial classification of
self-dual $[20,10,9]$ codes over $\GF(7)$.

\begin{thm}\label{thm}
Up to equivalence, 
the extended quadratic residue code of length $20$ over $\GF(7)$
is a unique self-dual $[20,10,9]$ code $C$ over $\GF(7)$ such that
$A_7(C)$ is isomorphic to $D_{20}^+$. 
\end{thm}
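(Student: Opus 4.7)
The plan is to follow the strategy announced in the abstract: set up a bijection between equivalence classes of self-dual $[20,10,9]$ codes $C$ over $\GF(7)$ with $A_7(C)\cong D_{20}^+$ on the one hand, and equivalence classes of skew-Hadamard matrices of order $20$ on the other, and then invoke the known (short) list on the Hadamard side.

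Fix such a code $C$ and an isometry $\phi\colon A_7(C)\to D_{20}^+$. Construction~A endows $A_7(C)$ with a distinguished sublattice $\sqrt{7}\,\ZZ^{20}$, arising from $7\ZZ^{20}\subset\ZZ^{20}$, and $\phi$ carries this onto a sublattice $L\subset D_{20}^+$ isometric to $\sqrt{7}\,\ZZ^{20}$. Such an $L$ is generated by a \emph{frame}: twenty pairwise orthogonal vectors of norm $7$ in $D_{20}^+$. An inventory of the norm-$7$ vectors of $D_{20}^+$ (all of which lie in the non-integer glue class $D_{20}+(\tfrac12)^{20}$) shows that, modulo $\Aut(D_{20}^+)$ and the signed-permutation action on the frame generators, the matrix $M$ whose rows are the frame vectors, cleared of denominators, can be normalized into the form $M=2I+H$ for some $(\pm1)$-matrix $H$ of order $20$; the identity $MM^T=28\,I$ then becomes $HH^T+2(H+H^T)=24\,I$.

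Next, I would translate the self-duality $C=C^\perp$ into the skew condition $H+H^T=2I$, in which case the previous identity reduces to $HH^T=20\,I$, so that $H$ is a skew-Hadamard matrix of order $20$. Conversely, reducing the rows of $2I+H$ modulo $7$ produces a generator matrix of a self-dual $[20,10,9]$ code over $\GF(7)$ whose Construction~A lattice is $D_{20}^+$. Under these mutually inverse constructions, equivalence of codes should match equivalence of skew-Hadamard matrices in the usual sense (monomial transformations on rows and columns, together with transposition).

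With the dictionary in hand, the final step is to enumerate the equivalence classes of skew-Hadamard matrices of order $20$---a list that is already available in the literature---apply the conversion to each representative, and verify that every resulting code coincides with $QR_{20}$ up to monomial equivalence. Matching weight enumerators, or comparing automorphism groups, provides a concrete invariant for this final verification; because the list is short, the computation is bounded. The principal obstacle lies in the first half of the correspondence: proving rigorously that the frame normalizes to $M=2I+H$ with $H$ skew-Hadamard exactly when $C$ is self-dual, and that the equivalence relations on the two sides match precisely. Once this dictionary is secured, the theorem follows from the known classification of skew-Hadamard matrices of order $20$.
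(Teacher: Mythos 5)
Your overall architecture matches the paper's: pass from the $7$-frame $\{\Psi(\ep_j)\}$ in $D_{20}^+$ to a matrix $F=2I+H$ with $H$ a $(\pm1)$-matrix, show $H$ is skew-Hadamard, and then invoke the classification of skew-Hadamard matrices of order $20$. Your inventory of norm-$7$ vectors is also right: they all lie in $\frac12\allone+D_{20}$, so each frame vector is $\frac12$ times an odd integer vector with one entry $\pm3$ and nineteen entries $\pm1$, and orthogonality of the frame forces the $\pm3$'s into distinct coordinates, which permits the normalization $F=2I+H$.

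The genuine gap is your claim that ``self-duality $C=C^\perp$ translates into the skew condition $H+H^\top=2I$.'' It does not: self-duality corresponds to unimodularity of $A_7(C)$, which is already built into the hypothesis $A_7(C)\cong D_{20}^+$ and yields nothing beyond $FF^\top=F^\top F=28I$, i.e., $HH^\top+2(H+H^\top)=24I$. Orthogonality of columns $i\neq h$ only gives $3(f_{i,h}+f_{h,i})=-\sum_{j\neq i,h}f_{j,i}f_{j,h}$, which is perfectly consistent with $f_{i,h}=f_{h,i}$. What actually forces skewness is the \emph{minimum weight} $9$: the paper first proves that any $\xi\in A_7(C)$ of norm $2$ has $|(\xi,\ep_i)|\geq2$ for at most one $i$ (from $a_1+a_2=\wt(x)\geq 9$ and $a_1+4a_2\leq 7\|\xi\|^2=14$, so $a_2\leq 1$), and then, assuming $f_{h,i}=f_{i,h}$, exhibits the norm-$2$ vector $\xi=\Psi^{-1}(e_h+f_{i,h}e_i)$ with $(\xi,\ep_h)=2$ and $(\xi,\ep_i)=2f_{i,h}$, a contradiction. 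Without an argument of this kind your $H$ need not even be Hadamard. Relatedly, your converse is false as stated: for the non-Paley skew-Hadamard matrix $S_2$ of order $20$, the code generated by $S_2+2I$ is self-dual but has minimum weight $8$, so it is not true that ``every resulting code coincides with $QR_{20}$''; the correct endgame is to compute the two candidate codes, discard $C(S_2)$ by its weight enumerator, and separately verify $A_7(QR_{20})\cong D_{20}^+$ to identify the survivor with $QR_{20}$. Finally, be careful which equivalence you enumerate on the Hadamard side: the construction $H\mapsto C(H)$ is invariant only under \emph{skew-Hadamard} equivalence $H\mapsto PHP^\top$, which is a priori finer than ordinary Hadamard equivalence (and transposition is not harmless here); the paper obtains the count of exactly two classes from the classification of doubly regular tournaments of order $19$, not directly from the Hadamard-equivalence count.
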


All computer calculations in this paper
were done with the help of {\sc Magma}~\cite{Magma}.

\section{Preliminaries}\label{sec:Pre}

In this section, we give definitions and notions on
self-dual codes, unimodular lattices and skew-Hadamard matrices.
Some basic facts on these subjects
are also provided.

\subsection{Self-dual codes}
An {\em $[n,k]$ code} $C$ over $\GF(p)$ is a $k$-dimensional subspace of 
$\GF(p)^n$.
The value $n$ is called the {\em length} of $C$.
The {\em weight} $\wt(x)$ of a vector $x \in \GF(p)^n$ is 
the number of non-zero components of $x$.
A vector of $C$ is called a {\em codeword} of $C$.
The minimum non-zero weight of all codewords in $C$ is called 
the {\em minimum weight} of $C$ and an $[n,k]$ code with minimum 
weight $d$ is called an $[n,k,d]$ code.
The {\em weight enumerator} $W(C)$ of $C$ is given by 
$W(C)= \sum_{i=0}^{n} A_i y^i$, where $A_i$ is the number of codewords of 
weight $i$ in $C$.  
The {\em dual code} $C^{\perp}$ of $C$ is defined as
\[
C^{\perp}=
\{x \in \GF(p)^n \mid x \cdot y = 0 \text{ for all } y \in C\},
\]
under the standard inner product $x \cdot y$.
A code $C$ is  called {\em self-dual} if $C = C^{\perp}$. 
Two codes $C$ and $C'$ are {\em equivalent} if there exists a $(1,-1,0)$-monomial
matrix $M$ with $C' =\{c M \mid c \in C \}$.

\subsection{Unimodular lattices}
An $n$-dimensional (Euclidean) lattice is a discrete subgroup
of rank $n$ in $\RR^n$.
A lattice $L$ is {\em unimodular} if
$L = L^{*}$, where
the dual lattice $L^{*}$ is defined as 
\[
L^{*} = \{ x \in {\RR}^n \mid (x,y) \in \ZZ \text{ for all }
y \in L\},
\]
under the standard inner product $(x,y)$.
The {\em norm} $\|x\|^2$ of a vector $x \in \RR^n$ is $(x, x)$.
The {\em minimum norm} of $L$ is the smallest 
norm among all nonzero vectors of $L$.
Two lattices $L$ and $L'$ are {\em isomorphic}, denoted $L \cong L'$,
if there exists an orthogonal matrix $A$ with
$L' = \{x A \mid x \in L \}$.

Let $C$ be a code of length $n$ over $\GF(p)$ and
let $\ep_1, \ldots, \ep_{n}$ be an orthogonal
basis of $\RR^n$ satisfying $(\ep_i, \ep_j) = p \delta_{i,j}$,
where $\delta_{i,j}$ is the Kronecker delta.
Then we define the lattice $A_{p}(C)$ obtained from $C$ by
{\em Construction A} as
\[
A_p(C)=\{\frac1p\sum_{i=1}^{n}x_i\ep_i\mid
x=(x_1,\dots,x_n)\in\Z^{n},\;x\bmod{p}\in C\}.
\]
It is known that $A_{p}(C)$ is unimodular if and only if
$C$ is self-dual.
A set $\{f_1, \ldots, f_{n}\}$ of $n$ vectors $f_1, \ldots, f_{n}$ in an
$n$-dimensional lattice $L$ with
$(f_i, f_j) = k \delta_{i,j}$
is called a {\em $k$-frame} of $L$.
Clearly, $A_p(C)$ contains a $p$-frame.
Conversely, if a unimodular lattice $L$ contains a $p$-frame, then there is a
self-dual code $C$ over $\GF(p)$ with $A_p(C) \cong L$
(see~\cite{HMV}).

Let $C$ be a self-dual $[20,10,d]$ code over $\GF(7)$ with
$d \in \{8,9\}$.  Then it is easy to see that
$A_7(C)$ has minimum norm $2$.
It is known that there are $12$ nonisomorphic
$20$-dimensional unimodular lattices having minimum norm $2$
(see~\cite[Table~16.7]{SPLAG}), and one of them is $D^+_{20}$.
The lattice $D_{20}^+$
is defined from the root lattice $D_{20}$ as follows:
\begin{align*}
D_{20}&=\{\sum_{i=1}^{20}\alpha_i e_i\mid (\alpha_1,\dots,\alpha_{20})
\in\Z^{20},\;\sum_{i=1}^{20}\alpha_i\equiv0\pmod2\},\\
D_{20}^+&=\langle D_{20},\frac12\allone\rangle,
\end{align*}
where
$e_i=(\delta_{1,i},\ldots,\delta_{20,i})$
$(1 \le i \le 20)$
and $\allone$ denotes the all-one vector.
Note that $D_{20}$ is the even sublattice of $D_{20}^+$, that is,
the sublattice consisting of vectors of even norm in $D_{20}^+$.

\subsection{Skew-Hadamard matrices}
A {\em Hadamard matrix} of order $n$ is an $n \times n$ $(1,-1)$-matrix
$H$ such that $H H^\top = nI$,  
where $I$ is the identity matrix
and $H^\top$ denotes the transposed matrix of $H$.
It is well known that the order $n$ is necessarily $1,2$, or a multiple of $4$.
Two Hadamard matrices $H$ and $K$ are said to be {\em equivalent}
if there are $(1,-1,0)$-monomial matrices $P$ and $Q$ with $K = PHQ$.
All Hadamard matrices of orders up to $32$ have been classified
(see~\cite[Chap.~7]{HSS-OA} for orders up to $28$ and 
\cite{KT13} for order $32$, see also~\cite{Hadamard}).

A Hadamard matrix $H$ of order $n$ is called a {\em skew-Hadamard matrix} 
if $H+H^\top=2I$.
Skew-Hadamard matrices are a class of Hadamard matrices, which
has been widely studied (see e.g., \cite{GS}, \cite{LW91}).
The numbers of inequivalent skew-Hadamard matrices
of orders $4,8,12,16,20,24$ are 
$1, 1, 1, 3, 2, 11$, respectively~\cite{LW91}.
We denote by $S_1$ the Paley Hadamard matrix of order $20$,
which is a skew-Hadamard matrix.
The other skew-Hadamard matrix
of order $20$ can be found in~\cite{skewH}
and we denote the matrix by $S_2$.
Moreover, we have verified with the help of
{\sc Magma} that $S_2$ is equivalent to
\verb+had.20.toncheviv+ in~\cite{Hadamard}.

The following lemma can be proved in the same manner
as~\cite[Lemma~3]{MT}.

\begin{lem}\label{lem:MT}
Let $F$ be a square matrix all of whose entries are integers.
If $FF^\top=kI$ and $p$ is a prime divisor of $k$ such that
$p^2\nmid k$, then $F$ generates a self-dual code
over $\GF(p)$.
\end{lem}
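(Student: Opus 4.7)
The plan is to let $C$ be the code over $\GF(p)$ generated by the rows of the mod-$p$ reduction $\bar F$ of $F$, and to verify the two defining conditions for self-duality: $C\subseteq C^\perp$ and $\dim C=n/2$, where $n$ is the size of $F$.

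First I would reduce the identity $FF^\top=kI$ modulo $p$; since $p\mid k$, this yields $\bar F\bar F^\top=0$ over $\GF(p)$, so every row of $\bar F$ is orthogonal to every other row and to itself. This already gives $C\subseteq C^\perp$, hence $\dim C\le n/2$, and also forces $n$ to be even (as can be seen independently from $(\det F)^2=k^n$ together with $v_p(k)=1$).

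The only real content is the matching lower bound $\dim C\ge n/2$. For this I would invoke the Smith normal form of $F$ over $\Z$: let $d_1,\dots,d_n$ denote the elementary divisors of $F$. Two facts are then in play: the rank $\dim C=\mathrm{rank}\,\bar F$ equals the number of $d_i$ that are coprime to $p$, and $\prod_{i=1}^n d_i=|\det F|=k^{n/2}$, so
\[
\sum_{i=1}^n v_p(d_i)=\tfrac{n}{2}\,v_p(k)=\tfrac{n}{2},
\]
using the hypothesis $v_p(k)=1$. Each $d_i$ divisible by $p$ contributes at least $1$ to this sum, so at most $n/2$ of the $d_i$ are divisible by $p$, giving $\dim C\ge n/2$ and completing the argument.

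The hypothesis $p^2\nmid k$ is used precisely in this last step: without it, an elementary divisor could absorb more than one factor of $p$, breaking the tight count that pins down the rank. There is no serious obstacle beyond recognising that the Smith normal form (equivalently, the elementary divisor structure of $F$) is the right tool, since it is exactly what converts the $p$-adic information about $\det F$ into information about $\mathrm{rank}\,\bar F$.
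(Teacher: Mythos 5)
Your proof is correct. The self-orthogonality step ($p\mid k$ forces $\bar F\bar F^\top=0$, hence $C\subseteq C^\perp$ and $\dim C\le n/2$) is fine, and the Smith-normal-form count is airtight: the rank of $\bar F$ over $\GF(p)$ is the number of elementary divisors prime to $p$, while $\sum_i v_p(d_i)=v_p(k^{n/2})=n/2$ bounds the number of $p$-divisible elementary divisors by $n/2$, so $\dim C\ge n/2$ and equality holds. You also correctly isolate where $p^2\nmid k$ enters. Note that the paper itself supplies no proof of this lemma, only the remark that it ``can be proved in the same manner as [MT, Lemma~3]''; the argument alluded to there is the more direct one: from $FF^\top=kI$ one also has $F^\top F=kI$, so if $x\in C^\perp$ then lifting $x$ to $\Z^n$ gives $Fx^\top=py^\top$ for some integer vector $y$, whence $(k/p)x^\top=F^\top y^\top$, and since $k/p$ is a unit modulo $p$ this puts $x$ in the row space of $F^\top$ modulo $p$ --- combined with $F^\top = kF^{-1}$ this yields $C^\perp\subseteq C$ without any dimension count. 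Your route trades that computation for the elementary-divisor bookkeeping; both are complete and use the hypothesis $v_p(k)=1$ in an essential way, yours to pin the rank at exactly $n/2$, the direct one to invert $k/p$ modulo $p$.
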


Hence, the code over $\GF(7)$ generated 
by the row vectors of $H+2I$ is self-dual,
where $H$ is a skew-Hadamard matrix of order $20$.

\section{Proof of Theorem~\ref{thm}}

In this section, we give a proof of Theorem~\ref{thm},
which is the main result of this paper.


\begin{lem}\label{lem:1}
Let $C$ be a self-dual $[20,10,9]$ code over $\GF(7)$.
If $\xi\in A_7(C)$ and $\|\xi\|^2=2$, then
\[
|\{i\mid 1\leq i\leq 20,\;|(\xi,\ep_i)|\geq2\}|\leq1.
\]
\end{lem}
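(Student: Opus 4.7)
The plan is to translate the hypothesis $\|\xi\|^2 = 2$ into an integer constraint on the preimage of $\xi$ under Construction A, combine it with the minimum weight hypothesis, and read off the conclusion from a short diophantine inequality.

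First, I would write $\xi = \frac{1}{7}\sum_{i=1}^{20} x_i \ep_i$ with $x = (x_1,\dots,x_{20}) \in \Z^{20}$ and $c := x \bmod 7 \in C$. Using $(\ep_i,\ep_j) = 7\delta_{i,j}$, one gets $(\xi, \ep_i) = x_i$ and $\|\xi\|^2 = \frac{1}{7}\sum_{i=1}^{20} x_i^2$. So the hypothesis $\|\xi\|^2 = 2$ is equivalent to $\sum_{i=1}^{20} x_i^2 = 14$, and the quantity we want to bound is $|\{i : |x_i| \geq 2\}|$.

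Next, from $\sum x_i^2 = 14$ I get $|x_i| \leq 3$ for every $i$. Since $|x_i| \leq 3 < 7$, the reduction $x_i \bmod 7$ is zero if and only if $x_i = 0$, so the support of $c$ coincides with the support of $x$. Because $\xi \neq 0$, we have $c \neq 0$, hence $\wt(c) \geq 9$ by the minimum distance hypothesis.

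Finally, let $n_k = |\{i : |x_i| = k\}|$ for $k = 1,2,3$. The two constraints become
\[
n_1 + 4n_2 + 9n_3 = 14, \qquad n_1 + n_2 + n_3 \geq 9.
\]
Subtracting gives $3n_2 + 8n_3 \leq 5$, which forces $n_3 = 0$ and $n_2 \leq 1$. Hence $|\{i : |x_i| \geq 2\}| = n_2 + n_3 \leq 1$, as desired.

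There is no real obstacle here: once one observes that the minimum distance $9$ excludes most ways of partitioning $14$ among the squares $1,4,9$, the inequality $3n_2 + 8n_3 \leq 5$ does all the work. The only point that requires a moment's care is verifying that the support of $x$ coincides with the support of $c$, which relies crucially on the norm bound $\|\xi\|^2 = 2$ forcing $|x_i| < 7$.
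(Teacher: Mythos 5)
Your proof is correct and follows essentially the same route as the paper: reduce to the integer vector $x$ with $\sum x_i^2=14$, use $|x_i|\le 3<7$ to identify the support of $x$ with that of $x\bmod 7$ (hence weight $\ge 9$), and finish with the same counting inequality (the paper works with $a_1+4a_2\le 14$ and $a_1+a_2\ge 9$, which is your argument with $n_2$ and $n_3$ merged). No issues.
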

\begin{proof}
Write
\[
\xi=\frac{1}{7}\sum_{i=1}^{20}x_i\ep_i,\;
x=(x_1,\dots,x_n)\in\Z^{20},\;x\bmod{7}\in C.
\]
Since for each $j\in\{1,\dots,20\}$,
\begin{align*}
x_j^2&=7\|\frac17 x_j\ep_j\|^2
\\&\leq7\|\frac17 \sum_{i=1}^{20}x_i\ep_i\|^2
\nexteq7\|\xi\|^2
\nexteq14,
\end{align*}
we have 
\begin{equation}\label{eq:lem3}
x_j\equiv0\pmod7\iff x_j=0\iff (\xi,\ep_j)=0. 
\end{equation}
Set
\begin{align*}
a_1&=|\{i\mid 1\leq i\leq 20,\;|(\xi,\ep_i)|=1\}|,\\
a_2&=|\{i\mid 1\leq i\leq 20,\;|(\xi,\ep_i)|\geq2\}|.
\end{align*}
Then by \eqref{eq:lem3} we have
\begin{align*}
a_1+a_2&=\wt(x)\geq9,
\end{align*}
and we have
\begin{align*}
a_1+4a_2&\leq
\sum_{i=1}^{20}(\xi,\ep_i)^2
\nexteq
7\|\xi\|^2
\nexteq
14.
\end{align*}
Thus $a_2\leq\frac53$, and hence $a_2\leq1$.
\end{proof}

\begin{prop}\label{prop:1}
Let $C$ be a self-dual $[20,10,9]$ code over $\GF(7)$ with
$A_7(C)\cong D_{20}^+$. Then there exists a skew-Hadamard matrix
$H$ of order $20$
such that $C$ is generated by the row vectors of $H+2I$ over $\GF(7)$.
\end{prop}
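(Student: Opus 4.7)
Plan: Use the given isomorphism $\phi: A_7(C) \to D_{20}^+$ to transfer the $7$-frame $\{\varepsilon_i\}$ of $A_7(C)$ to a $7$-frame $\{f_i := \phi(\varepsilon_i)\}$ of $D_{20}^+$. By a judicious choice of $\phi$ (composing with automorphisms of $D_{20}^+$), arrange the $f_i$'s into a rigid form from which a skew-Hadamard matrix $H$ with the desired property can be read off.

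First, the shape of each $f_i$. Since $(f_i, f_i) = 7$ is odd and $D_{20}$ is even, $f_i \in D_{20} + \tfrac12\mathbf{1}$, so its coordinates $\beta_{ik}$ with respect to $e_1,\dots,e_{20}$ lie in $\tfrac12 + \Z$. Setting $g_{ik} := 2\beta_{ik}$ gives odd integers with $\sum_k g_{ik}^2 = 28$, and an elementary count shows each row has exactly one entry of absolute value $3$ and nineteen entries of absolute value $1$; let $\sigma(i)$ denote the position of the exceptional entry. To prove $\sigma$ is a bijection, fix $a \neq b$ and consider the norm-$2$ vectors $\xi_\pm := e_a \pm e_b \in D_{20}^+$. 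A short calculation gives $|(\xi_\pm, f_i)| \leq 1$ whenever $\sigma(i) \notin \{a,b\}$, while each $i$ with $\sigma(i) \in \{a,b\}$ contributes $|(\xi, f_i)| = 2$ to exactly one of $\xi_+, \xi_-$. Applying Lemma~\ref{lem:1} to both $\xi_+$ and $\xi_-$ yields $|\sigma^{-1}(a)|+|\sigma^{-1}(b)|\leq 2$ for every pair $a \neq b$; combined with $\sum_a |\sigma^{-1}(a)| = 20$, this forces $|\sigma^{-1}(a)| = 1$ for every $a$.

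Next, by composing $\phi$ with a suitable coordinate permutation and diagonal sign change (both automorphisms of $D_{20}^+$) we may assume $\sigma = \mathrm{id}$ and $g_{ii} = 3$ for all $i$. Defining $H_{ii} := 1$ and $H_{ij} := g_{ij}$ for $i \neq j$ yields $G := (g_{ij}) = H + 2I$ with $H \in \{\pm 1\}^{20 \times 20}$. To show $H$ is skew-Hadamard, re-apply Lemma~\ref{lem:1} to $\xi_\pm = e_a \pm e_b$ in these normalized coordinates: one checks $|(\xi_+, f_a)| = 2 \iff H_{ab} = +1$ and $|(\xi_-, f_a)| = 2 \iff H_{ab} = -1$, with analogous statements for $f_b$ and $H_{ba}$. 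The two applications of Lemma~\ref{lem:1} exclude $H_{ab} = H_{ba} = +1$ and $H_{ab} = H_{ba} = -1$, forcing $H_{ab} = -H_{ba}$; hence $H + H^\top = 2I$. Combined with $GG^\top = 28 I$ (equivalent to $(f_i, f_j) = 7\delta_{ij}$), this gives $HH^\top = 20 I$, so $H$ is a skew-Hadamard matrix of order $20$.

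Finally, for each $j$ the vector $2e_j \in D_{20}^+$ pulls back via $\phi^{-1}$ to a vector of $A_7(C)$ with $\varepsilon$-coordinates $((2e_j, f_i))_i = (g_{ij})_i$, giving a codeword of $C$ equal to the $j$-th column of $G$. Since $GG^\top \equiv 0 \pmod 7$, the rows of $G$ likewise lie in $C^\perp = C$; and since Lemma~\ref{lem:MT} (with $k = 28$, $p = 7$) ensures $G$ has rank $10$ over $\GF(7)$, these rows span $C$. Thus $C$ is generated by the rows of $H + 2I$. The main obstacle is the precise use of Lemma~\ref{lem:1} in two places---first to show that $\sigma$ is a bijection, then to force $H + H^\top = 2I$---both steps relying crucially on the minimum-weight-$9$ hypothesis.
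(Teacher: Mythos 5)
Your argument follows the paper's strategy closely (transfer the $7$-frame, show each frame vector has odd half-integer coordinates with a single $\pm3$, normalize, use Lemma~\ref{lem:1} to force skewness), and your use of Lemma~\ref{lem:1} with $\xi_\pm=e_a\pm e_b$ to prove $\sigma$ is a bijection is a valid alternative to the paper's shorter route (the paper just observes that $F^\top F=28I$ forces $FF^\top=28I$, so the rows as well as the columns each contain exactly one $\pm3$). However, your final step contains a genuine error. With your conventions, row $i$ of $G$ records the $e$-coordinates of $\phi(\ep_i)$, and what you correctly establish is that the \emph{columns} of $G$ are codewords of $C$ (namely the $\ep$-coordinates of $\phi^{-1}(2e_j)$). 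The inference ``$GG^\top\equiv 0\pmod 7$, hence the rows of $G$ lie in $C^\perp=C$'' is a non sequitur: $GG^\top\equiv0$ says the rows of $G$ are orthogonal to each other, i.e.\ to the \emph{row} space, not to the column space $C$. In fact the rows are never in $C$: from $H+H^\top=2I$ and $HH^\top=20I$ one gets $G^2=6H-16I\equiv -G\pmod 7$, so row $i$ of $G$ dotted with column $j$ of $G$ is $-g_{ij}\not\equiv0$, and no row of $G$ is orthogonal to any column. The statement is rescued by a one-line repair --- $C$ is the row space of $G^\top=H^\top+2I$, and $H^\top$ is again skew-Hadamard --- but as written the last paragraph proves the wrong claim. (The paper avoids this by setting up $F$ so that $\Psi(\ep_j)$ is the $j$-th \emph{column}, whence the rows of $F$ are the codewords.)

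A smaller point: your normalization invokes ``a suitable coordinate permutation and diagonal sign change (both automorphisms of $D_{20}^+$),'' but a diagonal sign change on an odd number of coordinates is \emph{not} an automorphism of $D_{20}^+$, since it moves $\frac12\allone$ out of $\frac12\allone+D_{20}$. The paper glosses over the same normalization, and it can be justified --- e.g.\ by absorbing the offending signs into a monomial equivalence of $C$ and conjugating the resulting $H$ by the corresponding diagonal matrix at the end, which preserves skew-Hadamardness --- but the parenthetical justification you give is not correct as stated.
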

\begin{proof}
Let $\Psi:A_7(C)\to D_{20}^+$ be an isomorphism.   
Since $\|\Psi(\ep_j)\|^2=\|\ep_j\|^2=7$ is odd,
$\Psi(\ep_j)\notin D_{20}$. Thus
$\Psi(\ep_j)\in\frac12\allone+D_{20}\subset\frac12(1+2\Z)^{20}$,
and hence there exist odd integers $f_{i,j}$ such that
\[
\Psi(\ep_j)=\frac12\sum_{i=1}^{20}f_{i,j}e_i.
\]
Let $F$ denote the $20\times20$ matrix whose $(i,j)$ entry is $f_{i,j}$.
Then
$F^\top F=28I$.
In particular,
\[
\sum_{h=1}^{20} f_{h,i}^2=28.
\]
Since $f_{h,i}$ are odd integers, we see that there exists a unique
$h_i$ such that $f_{h_i,i}=\pm3$. 
Since $FF^\top=28I$,
the mapping $i\mapsto h_i$
is a bijection from $\{1,\dots,20\}$ to itself. 
%

Now we may assume without loss of generality
\[
f_{h,i}=\begin{cases}
3&\text{if $h=i$,}\\
\pm1&\text{otherwise}.
\end{cases}
\]
Set $H=F-2I$. Then all the entries of $H$ are $\pm1$,
and the diagonal entries are $1$. 

We claim $H+H^\top=2I$. To prove this, we need to show
$f_{h,i}+f_{i,h}=0$ for $1\leq h<i\leq 20$. Suppose 
$f_{h,i}=f_{i,h}$ for some $1\leq h<i\leq 20$. 
Set $\xi=\Psi^{-1}(e_h+f_{i,h}e_i)$.
Then $\|\xi\|^2=\|e_h+e_i\|^2=2$, and
\begin{align*}
(\xi,\ep_i)&=(\Psi(\xi),\Psi(\ep_i))
\nexteq
(e_h+f_{i,h}e_i,\frac12\sum_{j=1}^{20}f_{j,i}e_j)
\nexteq
\frac12(f_{h,i}+f_{i,h}f_{i,i})
\nexteq
2f_{i,h}.
\end{align*}
Similarly, 
\begin{align*}
(\xi,\ep_h)&=(\Psi(\xi),\Psi(\ep_h))
\nexteq
(e_h+f_{i,h}e_i,\frac12\sum_{j=1}^{20}f_{j,h}e_j)
\nexteq
\frac12(f_{h,h}+f_{i,h}^2)
\nexteq
2.
\end{align*}
These contradict Lemma~\ref{lem:1}, and complete the proof
of the claim.

Since
\begin{align*}
H^\top H&=(F^\top-2I)(F-2I)
\nexteq
28I-2(H^\top+H+4I)+4I
\nexteq
20I,
\end{align*}
$H$ is a Hadamard matrix.

Finally, since
\begin{align*}
D_{20}^+&\ni 2e_i
\nexteq
\frac{1}{14}\sum_{h=1}^{20}28\delta_{h,i}e_h
\nexteq
\frac{1}{14}\sum_{h=1}^{20}\sum_{j=1}^{20}f_{i,j}f_{h,j}e_h
\nexteq
\frac{1}{7}\sum_{j=1}^{20}f_{i,j}\frac12\sum_{h=1}^{20}f_{h,j}e_h
\nexteq
\frac{1}{7}\sum_{j=1}^{20}f_{i,j}\Psi(\ep_j)
\nexteq
\Psi(\frac{1}{7}\sum_{j=1}^{20}f_{i,j}\ep_j),
\end{align*}
we have
\[
\frac{1}{7}\sum_{j=1}^{20}f_{i,j}\ep_j\in A_7(C).
\]
Thus the $i$-th row of $F=H+2I$ belongs to $C$.
The fact that $F$ generates the self-dual code $C$ 
follows from Lemma~\ref{lem:MT}.
\end{proof}

We say that skew-Hadamard matrices $H$ and $H'$ of order $n$
are {\em skew-Hadamard equivalent} if
there exists a $(1,-1,0)$-monomial matrix $P$ with $PHP^\top = H'$.
Let $H$ and $H'$ be skew-Hadamard matrices of order $20$.
Let $C(H)$ denote  the code over $\GF(7)$ generated by
the row vectors of $H+2I$.
If $H$ and $H'$ are skew-Hadamard equivalent, then
$C(H)$ and $C(H')$ are equivalent. 
By Proposition~\ref{prop:1},
we can convert the classification of
self-dual $[20,10,9]$ codes $C$ over $\GF(7)$ with
$A_7(C)\cong D_{20}^+$ to that of
skew-Hadamard matrices of order $20$, up to skew-Hadamard equivalence.
The existence of a skew-Hadamard matrix of order $n$ is equivalent
to the existence of a doubly regular tournament of order $n-1$~\cite{RB72}.
It is known that there are two 
doubly regular tournaments of order $19$, up to isomorphism
(see~\cite{McKay}).
This implies that there are two 
skew-Hadamard matrices of order $20$, up to 
skew-Hadamard equivalence.
Indeed, let $H$ be a skew-Hadamard matrix of order $20$
and let $D$ be the diagonal matrix whose diagonal entries
are the first row of $H$.
Then 
\[
DHD = 
\left( \begin{array}{cc}
1 & \allone \\
-\allone^\top & M
\end{array}
\right).
\]
Here the $19 \times 19$ $(1,0)$-matrix 
$(M+J)/2-I$ is the adjacency matrix of a
doubly regular tournament of order $19$, where
$J$ is the $19 \times 19$ all-one matrix.
Hence, isomorphic doubly regular tournaments of order $19$
give skew-Hadamard matrices of
order $20$, which are skew-Hadamard equivalent.
The matrices $S_1$ and $S_2$ give
the two skew-Hadamard matrices of order $20$, up to
skew-Hadamard equivalence.

We have verified with the help of {\sc Magma} that the two self-dual codes
$C(S_1)$ and $C(S_2)$ have the following weight enumerators:
\begin{align*}
W(C(S_1))=
&
 1
+ 6840 y^{9}
+ 47880 y^{10}
+ 200640 y^{11}
+ 957600 y^{12}
\\ &
+ 3625200 y^{13}
+ 10766160 y^{14}
+ 25701984 y^{15}
\\ &
+ 48495600 y^{16}
+ 68276880 y^{17}
+ 68299680 y^{18}
\\ &
+ 43155840 y^{19}
+ 12940944 y^{20},
\\
W(C(S_2))=
&
1
+ 1080 y^{8}
+ 5040 y^{9}
+ 40320 y^{10}
+ 215760 y^{11}
\\ &
+ 977040 y^{12}
+ 3571200 y^{13}
+ 10751040 y^{14}
\\ &
+ 25814304 y^{15}
+ 48431880 y^{16}
+ 68208840 y^{17}
\\ &
+ 68403000 y^{18}
+ 43106160 y^{19}
+ 12949584 y^{20},
\end{align*}
respectively.
In particular, $C(S_1)$ is a $[20,10,9]$ code, while
$C(S_2)$ has minimum weight $8$. 
By Proposition~\ref{prop:1}, $C(S_1)$ is a unique
self-dual $[20,10,9]$ code $C$ over $\GF(7)$ with
$A_7(C) \cong D_{20}^+$. 
In addition, 
we have verified with the help of
{\sc Magma} that 
$A_7(QR_{20}) \cong D_{20}^+$.
This completes the proof of Theorem~\ref{thm}.

\begin{rem}
The above argument yields that $C(S_1)$ is equivalent to $QR_{20}$.   
A general case including this fact was described 
in~\cite[p.~1041]{Chapman} without proof.
\end{rem}

\section{Some other constructions of self-dual $[20,10,9]$ codes}

Finally, in this section, we investigate some other constructions of
self-dual $[20,10,9]$ codes over $\GF(7)$.
%
Note that $D^+_{20}$ is the unique
$20$-dimensional unimodular lattice with minimum norm $2$ and 
kissing number $760$.
For a given self-dual $[20,10,9]$ code $C$ over $\GF(7)$, 
one can determine whether $A_7(C)$ is isomorphic to $D^+_{20}$ or not,
by computing the kissing number of $A_7(C)$ with the help of {\sc Magma}.
If $A_7(C)$ is isomorphic to $D^+_{20}$, then by Theorem~\ref{thm},
we have that $C$ is equivalent to $QR_{20}$. 

\begin{itemize}
\item 
Some self-dual $[20,10,9]$ codes over $\GF(7)$ were constructed
in~\cite[Table~6]{GH99} and~\cite[Table~7]{GHM}
as double circulant codes and quasi-twisted codes,
respectively (see~\cite{GHM} for the construction).
We have verified that $A_7(C) \cong D^+_{20}$
for all double circulant self-dual $[20,10,9]$ codes $C$.
Also, we have verified that $A_7(C) \cong D^+_{20}$
for all quasi-twisted self-dual $[20,10,9]$ codes $C$.
These imply that all double circulant self-dual $[20,10,9]$
codes and all quasi-twisted self-dual $[20,10,9]$ codes are
equivalent to $QR_{20}$.


\item
Let $A$ and $B$ be $5 \times 5$ circulant (resp.\ negacirculant)
matrices.
A $[20,10]$ code over $\GF(7)$ with the following generator matrix
\[
\left(
\begin{array}{cccc}
\quad & {\Large I} & \quad &
\begin{array}{cc}
 A & B \\
-B^\top & A^\top
\end{array}
\end{array}
\right)
\]
is called a four-circulant (resp.\ four-negacirculant) code.
By exhaustive search, 
we have verified that $A_7(C) \cong D^+_{20}$
for all four-circulant self-dual $[20,10,9]$ codes $C$.
Also, we have verified that $A_7(C) \cong D^+_{20}$
for all four-negacirculant self-dual $[20,10,9]$ codes $C$.

\item
Let $C$ be a self-dual code of length $20$ over $\GF(7)$.
Let $x$ be a vector with $x \cdot x =0$.
Then $C(x)=\langle C \cap \langle x \rangle^\perp, x \rangle$
is a self-dual code over $\GF(7)$.
By exhaustive search, 
we have verified that $A_7({QR_{20}}(x)) \cong D^+_{20}$
for all vectors $x$ in a set of complete representatives of
$\GF(7)^{20}/QR_{20}$ with $x\cdot x=0$.

\end{itemize}

Moreover,
our extensive search failed to discover a self-dual 
$[20,10,9]$ code $C$ over $\GF(7)$ with $A_7(C) \not\cong D^+_{20}$.
We are lead to conjecture that $QR_{20}$ is a unique
self-dual $[20,10,9]$ code over $\GF(7)$.

\bigskip
\noindent 
{\bf Acknowledgments.}
The authors would like to thank Sho Suda for useful discussions.
The authors would also like to thank the anonymous referee
for helpful comments.
This work is supported by JSPS KAKENHI Grant Number 26610032.


\end{document}